\theoremstyle{plain}
\newtheorem{theorem}                 {Theorem}      [section]
\newtheorem{proposition}  [theorem]  {Proposition}
\newtheorem{lemma}        [theorem]  {Lemma}
\newtheorem{conjecture}   [theorem]  {Conjecture}
\theoremstyle{definition}
\newtheorem{example}      [theorem]  {Example}
\newtheorem{remark}       [theorem]  {Remark}
\newtheorem{definition}   [theorem]  {Definition}
\numberwithin{equation}{section}
\def \R{{\mathbb R}}
\def \rn{{\mathbb R}}
\def \s{{\mathbb S}}
\def \C{{\mathbb C}}
\def \cn{{\mathbb C}}
\def \H{{\mathbb H}}
\def \B{\mathcal B}
\def \nab#1#2{\hbox{$\nabla$\kern -.3em\lower 1.0 ex
\hbox{$#1$}\kern -.1 em {$#2$}}}
\def\Re{\mathfrak R\mathfrak e}
\def \g{\mathfrak{g}}
\def \k{\mathfrak{k}}
\def \p{\mathfrak{p}}
\def \un{\mathfrak{u}}
\def \glr#1{\mathfrak{gl}_{#1}(\rn)}
\def \GLC#1{\text{\bf GL}_{#1}(\cn)}
\def \glc#1{\mathfrak{gl}_{#1}(\cn)}
\def \SO#1{\text{\bf SO}(#1)}
\def \SOO#1#2{\text{\bf SO}(#1,#2)}
\def \U#1{\text{\bf U}(#1)}
\def \u#1{\mathfrak{u}(#1)}
\def \SU#1{\text{\bf SU}(#1)}
\def \Sp#1{\text{\bf Sp}(#1)}
\DeclareMathOperator{\Div}{div}
\DeclareMathOperator{\trace}{trace}
\numberwithin{equation}{section}
\begin{document}

\title[Biharmonic functions]
{Biharmonic functions on the \\  special unitary group $\SU 2$}


\author{Sigmundur Gudmundsson}

\address{Mathematics, Faculty of Science\\ University of Lund\\
Box 118, Lund 221\\
Sweden}
\email{Sigmundur.Gudmundsson@math.lu.se}

\allowdisplaybreaks

\begin{abstract}
We construct new proper biharmonic functions defined on open and dense subsets of the special unitary group $\SU 2$.  Then we employ a duality principle to obtain new proper biharmonic functions from the non-compact $3$-dimensional hyperbolic space $\H^3$.
\end{abstract}

\subjclass[2010]{58E20, 31A30, 35R03}

\keywords{Biharmonic functions, Lie groups}

\maketitle

\section{Introduction}

The literature on biharmonic functions is vast, but usually the domains are either surfaces or open subsets of flat Euclidean space. The first proper biharmonic functions, from open subsets of the classical compact simple Lie groups $\SU n$, $\SO n$ and $\Sp n$, were recently constructed in \cite{Gud-Mon-Rat-1}.  They are all quotients of linear combinations of the matrix coefficients for the {\it standard} irreducible representation $\pi_1$ of the corresponding group. The authors make use of the well-known fact that these  matrix coefficients are all eigenfunctions of the corresponding Laplace-Beltrami operator.  For this see \cite{Gud-Sve-Ville-1} or Proposition 5.28 of \cite{Kna}.

In this paper we continue the study for the special unitary group $\SU 2$ to its higher finite-dimensional irreducible representations $\pi_n$. In our Theorems \ref{theorem-R2-SU2}, \ref{theorem-R3-SU2} and \ref{theorem-R4-SU2} we construct new proper biharmonic functions. They are all rational functions in the matrix coefficients of the irreducible representations $\pi_2$, $\pi_3$ and $\pi_4$ of $\SU 2$, respectively. When studying these cases an interesting pattern comes to light. This makes us believe that the same method will work for the general finite dimensional irreducible representation $\pi_n$ of $\SU 2$.  These thoughts are formulated in Conjecture \ref{conjecture-Rn-SU2}.  As an introduction to the representations $\pi_n$ of $\SU 2$ we recommend Fegan's book \cite{Feg}.

The special unitary group $\SU 2$ is, up to a constant multiple of the metric, isometric to the $3$-dimensional unit sphere $\s^3$ in the Euclidean $\rn^4$.  This means that our solutions can be seen as functions from open and dense subsets of $\s ^3$.  In Theorem \ref{theorem-duality} we describe a general duality principle and employ this to yield new proper biharmonic functions on the non-compact $3$-dimensional hyperbolic space $\H ^3$.

We conclude the paper with a short appendix providing a formula that hopefully will be useful to implement our calculations by means of some suitable software.

\section{Proper $r$-harmonic functions}\label{section-r-harmonic}

Let $(M,g)$ be a smooth manifold equipped with a semi-Riemannian metric $g$.  We complexify the tangent bundle $TM$ of $M$ to $T^{\cn}M$ and extend the metric $g$ to a complex-bilinear form on $T^{\cn}M$.  Then the gradient $\nabla f$ of a complex-valued function $f:(M,g)\to\cn$ is a section of $T^{\cn}M$.  In this situation, the well-known {\it linear} Laplace-Beltrami operator (alt. tension field) $\tau$ on $(M,g)$ acts on $f$ as follows
$$
\tau(f)=\Div (\nabla f)=\frac{1}{\sqrt{|g|}} \frac{\partial}{\partial x_j}
\left(g^{ij}\, \sqrt{|g|}\, \frac{\partial f}{\partial x_i}\right).
$$
For complex-valued functions $f_1,f_2,h_1,h_2:(M,g)\to\cn$ we have the following well-known relations
\begin{eqnarray}\label{tau-kappa-relations}\nonumber
\tau(f_1h_1)&=&\tau(f_1)\,h_1+2\,\kappa(f_1,h_1)+f_1\tau(h_1),\\
\kappa(f_1h_1,f_2h_2)&=&h_1h_2\,\kappa(f_1,f_2)+f_2h_1\,\kappa(f_1,h_2)\\
& &\qquad +f_1h_2\,\kappa(h_1,f_2)+f_1f_2\,\kappa(h_1,h_2),\nonumber
\end{eqnarray}
where the symmetric {\it conformality} operator $\kappa$ is given by
$$
\kappa(f,h)=g(\nabla f,\nabla h).
$$
For a positive integer $r$, the iterated Laplace-Beltrami operator $\tau^r$ is defined by
$$
\tau^{0} (f)=f,\quad \tau^r (f)=\tau(\tau^{(r-1)}(f)).
$$
\begin{definition}\label{definition-proper-r-harmonic} For a positive integer $r$, we say that a complex-valued function $f:(M,g)\to\cn$ is
\begin{enumerate}
\item[(a)] {\it $r$-harmonic} if $\tau^r (f)=0$, and
\item[(b)] {\it proper $r$-harmonic} if $\tau^r (f)=0$ and $\tau^{(r-1)} (f)$ does not vanish identically.
\end{enumerate}
\end{definition}

It should be noted that the {\it harmonic} functions are exactly the $1$-harmonic
and the {\it biharmonic} functions are the $2$-harmonic ones.  In some texts, the $r$-harmonic functions are also called {\it polyharmonic} of order $r$.

We recall that a map $\pi:(\hat M,\hat g)\to(M,g)$ between two semi-Riemannian manifolds is a \emph{harmonic morphism} if it pulls back germs of harmonic functions to germs of harmonic functions. The standard reference on this topic is the book \cite{BW-book} of Baird and Wood.   We also recommend the updated online bibliography \cite{Gud-bib}.

It was recently shown in \cite{Gud-Mon-Rat-1} that there is an interesting connections between the theory of $r$-harmonic functions and the notion of harmonic morphisms.  In the sequel, we shall often employ the two following results.

\begin{proposition}\cite{Gud-Mon-Rat-1}\label{proposition:lift-tension}
Let $\pi:(\hat M,\hat g)\to (M,g)$ be a submersive harmonic morphism
from a semi-Riemannian manifold $(\hat M,\hat g)$ to a Riemannian manifold
$(M,g)$. Further let $f:(M,g)\to\C$ be a smooth function and
$\hat f:(\hat M,\hat g)\to\C$ be the composition $\hat f=f\circ\pi$.
If $\lambda:\hat M\to\rn^+$ is the dilation of $\pi$ then the tension
field satisfies
$$
\tau(f)\circ\pi=\lambda^{-2}\tau(\hat f)\ \ \text{and}\ \
\tau^r(f)\circ\pi=\lambda^{-2}\tau(\lambda^{-2}\tau^{(r-1)}(\hat f)),
$$
for all positive integers $r\ge 2$.
\end{proposition}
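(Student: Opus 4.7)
The first identity is essentially the classical transformation law for the tension field under a submersive harmonic morphism, and the second is then obtained by an induction on $r$. The plan is to reduce to the general chain-rule for the tension field and to exploit both defining properties of a harmonic morphism.

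I would begin from the composition formula valid for any smooth map and any smooth $f\colon M\to\C$:
\begin{equation*}
\tau(\hat f)=\trace_{\hat g}\bigl(\nabla df\,(d\pi,d\pi)\bigr)+df(\tau(\pi)).
\end{equation*}
By the Fuglede--Ishihara characterisation, a harmonic morphism is both harmonic and horizontally (weakly) conformal. Harmonicity gives $\tau(\pi)=0$ and kills the second term. Horizontal conformality with dilation $\lambda$ means that $d\pi$ vanishes on the vertical distribution and that $g(d\pi(X),d\pi(Y))=\lambda^{2}\,\hat g(X,Y)$ for $X,Y\in\hor$. Choosing at each point of $\hat M$ a horizontal orthonormal frame $\{e_{1},\dots,e_{m}\}$, where $m=\dim M$, only the horizontal directions contribute to the trace, and the vectors $\lambda^{-1}d\pi(e_{i})$ form a $g$-orthonormal basis of the tangent space at the image point. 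The trace therefore collapses to $\lambda^{2}\,(\tau(f)\circ\pi)$, which rearranges to $\tau(f)\circ\pi=\lambda^{-2}\tau(\hat f)$, the first identity.

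The iterated statement then follows by induction on $r$: applying the first identity to the smooth function $\tau^{(r-1)}(f)\colon M\to\C$ yields
\begin{equation*}
\tau^{r}(f)\circ\pi=\lambda^{-2}\tau\bigl(\tau^{(r-1)}(f)\circ\pi\bigr),
\end{equation*}
and the inductive hypothesis re-expresses $\tau^{(r-1)}(f)\circ\pi$ in the claimed form. The principal technical subtlety is that $(\hat M,\hat g)$ is only semi-Riemannian, so an adapted orthonormal frame need not exist in general. However, horizontal conformality with positive dilation onto a Riemannian target $(M,g)$ forces $\hat g|_{\hor}$ to be positive definite, which is exactly what is needed to make the frame argument legitimate; the vertical directions drop out of the trace because $d\pi$ kills them, irrespective of the vertical signature. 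Once this positivity of the horizontal metric is noted, the argument is a direct computation with no hidden sign or convergence issues.
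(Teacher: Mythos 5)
The paper does not actually prove this proposition --- it is quoted from \cite{Gud-Mon-Rat-1} --- so there is no internal argument to compare yours against; I can only assess your proposal on its own terms. Your proof of the first identity is the standard one and is correct: the composition law $\tau(\hat f)=\trace_{\hat g}\bigl(\nabla df(d\pi,d\pi)\bigr)+df(\tau(\pi))$, harmonicity of $\pi$ killing the second term, and horizontal conformality collapsing the trace to $\lambda^{2}\,(\tau(f)\circ\pi)$. Your observation that a Riemannian target together with $\lambda>0$ forces $\hat g$ to be positive definite on the horizontal distribution, so that an adapted frame exists and the vertical directions drop out of the trace, is exactly the point that needs to be made in the semi-Riemannian setting.

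There is, however, a mismatch in your treatment of the iterated identity. Applying the first identity to the function $\tau^{(r-1)}(f)$ gives $\tau^{r}(f)\circ\pi=\lambda^{-2}\tau\bigl(\tau^{(r-1)}(f)\circ\pi\bigr)$, and for $r=2$ this is precisely the displayed formula because $\tau(f)\circ\pi=\lambda^{-2}\tau(\hat f)$. But for $r\ge 3$ the induction, carried out as you describe it, produces the fully nested expression $\lambda^{-2}\tau\bigl(\lambda^{-2}\tau(\cdots\lambda^{-2}\tau(\hat f))\bigr)$ rather than the expression $\lambda^{-2}\tau\bigl(\lambda^{-2}\tau^{(r-1)}(\hat f)\bigr)$ printed in the statement; the two agree only when $\lambda$ is constant, in which case both reduce to $\lambda^{-2r}\tau^{r}(\hat f)$. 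So the sentence ``the inductive hypothesis re-expresses $\tau^{(r-1)}(f)\circ\pi$ in the claimed form'' does not actually close the induction on the formula as written: the inductive hypothesis gives $\tau^{(r-1)}(f)\circ\pi=\lambda^{-2}\tau\bigl(\lambda^{-2}\tau^{(r-2)}(\hat f)\bigr)$, not $\lambda^{-2}\tau^{(r-1)}(\hat f)$. This is arguably an imprecision in the statement rather than in your argument --- the recursive identity you actually establish is the correct one, and it is all that is used in this paper (always with $r=2$, or with constant dilation as in Proposition~\ref{proposition:lift-proper-r-harmonic}) --- but you should either state and prove the nested version, or note explicitly that the printed formula for $r\ge 3$ presupposes constant $\lambda$.
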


\begin{proposition}\cite{Gud-Mon-Rat-1}\label{proposition:lift-proper-r-harmonic}
Let $\pi:(\hat M,\hat g)\to (M,g)$ be a submersive harmonic morphism,
from a semi-Riemannian manifold $(\hat M,\hat g)$ to a Riemannian manifold
$(M,g)$, with constant dilation.  Further let $f:(M,g)\to\C$ be a smooth function and
$\hat f:(\hat M,\hat g)\to\C$ be the composition $\hat f=f\circ\pi$.
Then the following statements are equivalent
\begin{enumerate}
\item[(i)] $\hat f:(\hat M,\hat g)\to\C$ is proper $r$-harmonic,
\item[(ii)] $f:(M,g)\to\C$ is proper $r$-harmonic.
\end{enumerate}
\end{proposition}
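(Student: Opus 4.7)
The plan is to leverage Proposition \ref{proposition:lift-tension} together with the hypothesis that the dilation $\lambda$ is constant. Under this hypothesis, I expect to upgrade the recursive identity $\tau^r(f)\circ\pi=\lambda^{-2}\tau(\lambda^{-2}\tau^{r-1}(\hat f))$ into the cleaner multiplicative form
\begin{equation*}
\tau^r(f)\circ\pi=\lambda^{-2r}\,\tau^r(\hat f),\qquad r\ge 1.
\end{equation*}

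My first step will be to prove this formula by induction on $r$. For $r=1$ it is precisely the first identity of Proposition \ref{proposition:lift-tension}. For the inductive step I apply the second identity of that proposition with $r$ replaced by $r+1$, namely
\begin{equation*}
\tau^{r+1}(f)\circ\pi=\lambda^{-2}\,\tau\bigl(\lambda^{-2}\,\tau^{r}(\hat f)\bigr).
\end{equation*}
Because $\lambda$ is a positive constant, $\tau$ commutes with multiplication by $\lambda^{-2}$ by $\cn$-linearity of the Laplace-Beltrami operator, so the right-hand side becomes $\lambda^{-2(r+1)}\tau^{r+1}(\hat f)$, completing the induction.

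Once this formula is in hand, the equivalence of (i) and (ii) is almost immediate. Interpreting $\pi$ as surjective onto $M$ (which is the standing assumption implicit in the notion of lift used here, and which holds in every application relevant to this paper) and observing that the positive constant $\lambda$ is nowhere zero, the displayed formula shows that $\tau^r(\hat f)\equiv 0$ on $\hat M$ if and only if $\tau^r(f)\equiv 0$ on $M$. Applying the same conclusion at level $r-1$ then yields that $\tau^{r-1}(\hat f)$ vanishes identically if and only if $\tau^{r-1}(f)$ does. Combining these two equivalences delivers the desired equivalence of the proper $r$-harmonicity conditions.

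I do not anticipate a serious obstacle. The only mildly delicate points are the commutation of $\tau$ with the constant scalar $\lambda^{-2}$, which is immediate from linearity, and the transfer of the vanishing conditions across $\pi$, which requires only surjectivity of $\pi$ together with non-vanishing of $\lambda$. Neither of these is genuinely hard, so the whole argument essentially reduces to the induction carried out in the first paragraph.
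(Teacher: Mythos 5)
Your argument is sound in substance, and it is worth noting that the paper itself offers no proof of this proposition at all: it is simply imported from \cite{Gud-Mon-Rat-1}. So your self-contained derivation is a genuine addition rather than a variant of anything in the text. The key identity you aim for, $\tau^r(f)\circ\pi=\lambda^{-2r}\tau^r(\hat f)$ for constant $\lambda$, is correct, and once it is established the equivalence of (i) and (ii) follows exactly as you say, granted the (standard, and correctly flagged) surjectivity of $\pi$ needed to transfer the vanishing and non-vanishing statements back from $\hat M$ to $M$. One should also note, as you implicitly do, that the precise power of $\lambda$ is irrelevant: any nonzero constant factor relating $\tau^r(f)\circ\pi$ and $\tau^r(\hat f)$ suffices.

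There is, however, a small slip in your inductive step. You invoke the second identity of Proposition \ref{proposition:lift-tension} in the form $\tau^{r+1}(f)\circ\pi=\lambda^{-2}\,\tau\bigl(\lambda^{-2}\,\tau^{r}(\hat f)\bigr)$ and claim that pulling out the constant yields $\lambda^{-2(r+1)}\tau^{r+1}(\hat f)$. Read literally, that right-hand side gives $\lambda^{-4}\tau^{r+1}(\hat f)$ for every $r$, which matches your target only when $r+1=2$. (The displayed second identity of Proposition \ref{proposition:lift-tension} is itself only an accurate closed form for $r=2$; for higher $r$ it must be understood as the $r$-fold iterate of the operator $\lambda^{-2}\tau$.) The clean way to run your induction is to bypass that identity entirely and apply the \emph{first} identity of Proposition \ref{proposition:lift-tension} to the function $\tau^{r}(f)$ in place of $f$: this gives $\tau^{r+1}(f)\circ\pi=\lambda^{-2}\,\tau\bigl(\tau^{r}(f)\circ\pi\bigr)=\lambda^{-2}\,\tau\bigl(\lambda^{-2r}\tau^{r}(\hat f)\bigr)=\lambda^{-2(r+1)}\tau^{r+1}(\hat f)$, using the induction hypothesis and the $\C$-linearity of $\tau$. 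With that repair the proof is complete.
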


\section{The Riemannian Lie group $\GLC n$}

Let $G$ be a Lie group with Lie algebra $\g$ of left-invariant vector fields on $G$.  Then a Euclidean scalar product $g$ on $\g$
induces a left-invariant Riemannian metric on the group $G$ and turns it into a homogeneous Riemannian manifold. If $Z$ is a left-invariant vector field on $G$ and $f:U\to\cn$ is a complex-valued functions defined locally on $G$ then the first and second order derivatives satisfy
\begin{equation}\label{eq:derivativeZ}
Z(f)(p)=\frac {d}{ds}[f(p\cdot\exp(sZ))]\big|_{s=0},
\end{equation}
\begin{equation}\label{eq:derivativeZZ}
Z^2(f)(p)=\frac {d^2}{ds^2}[f(p\cdot\exp(sZ))]\big|_{s=0}.
\end{equation}

Further, assume that $G$ is a subgroup of the complex general linear group $\GLC n$ equipped with its standard Riemannian metric.  This is induced by the Euclidean scalar product on the Lie algebra $\glc n$ given by
$$
g(Z,W)=\Re\trace ZW^*.
$$
Employing the Koszul formula for the Levi-Civita connection $\nabla$ on $\GLC n$, we see that
\begin{eqnarray*}
g(\nab ZZ,W)&=&g([W,Z],Z)\\
&=&\Re\trace (WZ-ZW)Z^t\\
&=&\Re\trace W(ZZ^t-Z^tZ)^t\\
&=&g([Z,Z^t],W).
\end{eqnarray*}
Let $[Z,Z^t]_\g$ be the orthogonal projection of the bracket $[Z,Z^t]$ onto the subalgebra $\g$ of $\glc n$.  Then the above calculations shows that
$$\nab ZZ=[Z,Z^t]_\g.$$
This implies that the tension field $\tau(f)$ and the conformality operator $\kappa(f,h)$ are given by
\begin{equation}\label{tau-kappa-alie-groups}
\tau(f)=\sum_{Z\in\B}Z^2(f)-[Z,Z^t]_\g(f)
\ \ \text{and}\ \
\kappa(f,h)=\sum_{Z\in\B}Z(f)Z(h),
\end{equation}
where $\B$ is any orthonormal basis for the Lie algebra $\g$.

\begin{remark}
For $1\le i,j\le n$ we shall by
$E_{ij}$ denote the element of $\glr n$ satisfying
$$(E_{ij})_{kl}=\delta_{ik}\delta_{jl}$$ and by $D_t$ the diagonal
matrices $$D_t=E_{tt}.$$ For $1\le r<s\le n$ let $X_{rs}$ and
$Y_{rs}$ be the matrices satisfying
$$X_{rs}=\frac 1{\sqrt 2}(E_{rs}+E_{sr}),\ \ Y_{rs}=\frac
1{\sqrt 2}(E_{rs}-E_{sr}).$$
\end{remark}

\section{The standard irreducible representation $\pi_1$ of $\SU n$}

In this section we describe known proper biharmonic functions on the special unitary group $\SU n$ constructed in \cite{Gud-Mon-Rat-1}. They are quotients of first order homogeneous polynomials in the matrix coefficients of the standard irredible representation $\pi_1$ of $\SU 2$.

The unitary group $\U n$ is the compact subgroup of $\GLC n$ given by
$$
\U n=\{z\in\GLC{n}|\ z\cdot z^*=I_n\}
$$
with its standard matrix representation
$$
\pi^1=
\begin{bmatrix}
z_{11} & z_{12} & \cdots & z_{1n}\\
z_{21} & z_{22} & \cdots & z_{2n}\\
\vdots & \vdots & \ddots & \vdots \\
z_{n1} & z_{n1} & \cdots & z_{nn}
\end{bmatrix}.
$$
The circle group $\s^1=\{e^{i\theta}\in\cn | \ \theta\in\rn\}$ acts on the unitary group $\U n$ by multiplication
$$
(e^{i\theta},z)\mapsto e^{i\theta} z
$$
and the orbit space of this action is the special unitary group
$$
\SU n=\{ z\in\U n|\ \det z = 1\}.
$$
The natural projection $\pi:\U n\to\SU n$ is a harmonic morphism with constant dilation $\lambda\equiv 1$.

The Lie algebra $\u n$ of the unitary group $\U n$ satisfies
$$
\u{n}=\{Z\in\cn^{n\times n}|\ Z+Z^*=0\}
$$
and for this we have the canonical orthonormal basis
$$
\{Y_{rs}, iX_{rs}|\ 1\le r<s\le n\}\cup\{iD_t|\ t=1,\dots ,n\}.
$$

Now, by means of a direct computation based on \eqref{eq:derivativeZ}, \eqref{eq:derivativeZZ} and \eqref{tau-kappa-alie-groups}, we have the following basic result, see \cite{Gud-Sak-1}.

\begin{lemma}\label{lemma:complex}
For $1\le j,\alpha\le n$, let $z_{j\alpha}:\U n\to\cn$ be the complex-valued matrix coefficients of the standard representation of $\U n$ given by
$$
z_{j\alpha}:z\mapsto e_j\cdot z\cdot e_\alpha^t,
$$
where $\{e_1,\dots ,e_n\}$ is the canonical basis for $\cn^n$. Then the following relations hold
\begin{equation}\label{lemma5-1}
\tau(z_{j\alpha})= -n\cdot z_{j\alpha}\ \ \text{and}\ \ \kappa(z_{j\alpha},z_{k\beta})= -z_{k\alpha}z_{j\beta}.
\end{equation}
\end{lemma}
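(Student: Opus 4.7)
The plan is a direct computation based on the machinery of Section~3. From the matrix coefficient definition $z_{j\alpha}(p)=p_{j\alpha}$ together with \eqref{eq:derivativeZ}--\eqref{eq:derivativeZZ}, the two basic identities
$$
Z(z_{j\alpha})(p)=(pZ)_{j\alpha}, \qquad Z^2(z_{j\alpha})(p)=(pZ^2)_{j\alpha}
$$
hold for every $Z\in\u n$ and $p\in\U n$. Inserting these into the Lie-group expressions \eqref{tau-kappa-alie-groups} for $\tau$ and $\kappa$ reduces the entire problem to linear algebra on the orthonormal basis $\B=\{Y_{rs},iX_{rs}\mid r<s\}\cup\{iD_t\}$.

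For the tension field I would first observe that each element of $\B$ is either real antisymmetric (the $Y_{rs}$, for which $Z^t=-Z$) or purely imaginary symmetric (the $iX_{rs}$ and $iD_t$, for which $Z^t=Z$). In both cases $[Z,Z^t]=0$, so the correction term in \eqref{tau-kappa-alie-groups} vanishes on every basis vector, leaving
$$
\tau(z_{j\alpha})(p)=\Bigl(p\cdot\sum_{Z\in\B}Z^2\Bigr)_{j\alpha}.
$$
A direct calculation gives $Y_{rs}^2=(iX_{rs})^2=-\tfrac12(E_{rr}+E_{ss})$ and $(iD_t)^2=-E_{tt}$, and since each index $t$ appears exactly $n-1$ times in $\sum_{r<s}(E_{rr}+E_{ss})$, summation yields $\sum_{Z\in\B}Z^2=-(n-1)I_n-I_n=-nI_n$. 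Substitution produces $\tau(z_{j\alpha})=-n\,z_{j\alpha}$.

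For the conformality operator the same basic identities give
$$
\kappa(z_{j\alpha},z_{k\beta})(p)=\sum_{l,m}p_{jl}\,p_{km}\sum_{Z\in\B}Z_{l\alpha}Z_{m\beta},
$$
so the statement reduces to the completeness-type identity
$$
\sum_{Z\in\B}Z_{l\alpha}Z_{m\beta}=-\delta_{l\beta}\delta_{m\alpha}.
$$
This I would verify by summing separately over the three kinds of basis elements. The $Y_{rs}$ and $iX_{rs}$ contributions differ by the sign $i^2=-1$, and after summation their $\delta_{lm}\delta_{\alpha\beta}$ parts cancel while the remaining pieces combine to $-(1-\delta_{l\alpha})\,\delta_{l\beta}\delta_{m\alpha}$; the missing diagonal contribution when $l=\alpha$ is then supplied exactly by $\sum_t(iD_t)_{l\alpha}(iD_t)_{m\beta}=-\delta_{l\alpha}\delta_{lm}\delta_{l\beta}$. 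Substituting back yields $\kappa(z_{j\alpha},z_{k\beta})=-p_{j\beta}p_{k\alpha}=-z_{k\alpha}z_{j\beta}$.

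The only mildly technical point is the last combinatorial identity, whose verification requires splitting into the off-diagonal case $l\ne\alpha$ (handled by the cancellation between $Y_{rs}$ and $iX_{rs}$) and the diagonal case $l=\alpha$ (carried entirely by the $iD_t$ terms). Everything else is either bookkeeping on $\B$ or the standard Casimir-type reduction for $\tau$.
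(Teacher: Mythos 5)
Your proposal is correct and follows exactly the route the paper indicates: a direct computation from \eqref{eq:derivativeZ}, \eqref{eq:derivativeZZ} and \eqref{tau-kappa-alie-groups} using the orthonormal basis $\{Y_{rs},iX_{rs},iD_t\}$ of $\u n$, which the paper itself only sketches by reference to \cite{Gud-Sak-1}. The key identities $Z(z_{j\alpha})(p)=(pZ)_{j\alpha}$, $\sum_{Z\in\B}Z^2=-nI_n$, the vanishing of $[Z,Z^t]$ on each basis element, and the completeness relation $\sum_{Z\in\B}Z_{l\alpha}Z_{m\beta}=-\delta_{l\beta}\delta_{m\alpha}$ all check out.
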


The next result describes the first known proper biharmonic functions from the unitary group $\SU n$, see \cite{Gud-Mon-Rat-1}.

\begin{proposition}\label{proposition-R1-SUn}
Let $p,q\in\cn^n$ be linearly independent and $P,Q:\U n\to\cn$ be the complex-valued functions on the unitary group given by
$$
P(z)=\sum_{j=1}^np_jz_{j\alpha}\ \ \text{and}\ \ Q(z)=\sum_{k=1}^nq_kz_{k\beta}.
$$
Further, let the rational function $f(z)=P(z)/Q(z)$ be defined on the open and dense subset
$W_Q=\{z\in\U n|\ Q(z)\neq 0\}$ of the unitary group. Then the following is true.
\begin{itemize}
\item[(a)] The function $f$ is harmonic if and only if $\alpha=\beta$.
\item[(b)] The function $f$ is proper biharmonic if and only if $\alpha\neq\beta$.
\end{itemize}
The corresponding statements hold for the function induced on $\SU n$.
\end{proposition}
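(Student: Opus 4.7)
The plan is to carry out everything on the unitary group $\U n$, where the formulas of Lemma~\ref{lemma:complex} are directly available, and then descend to $\SU n$ via the harmonic morphism $\pi:\U n\to\SU n$ together with Proposition~\ref{proposition:lift-proper-r-harmonic}. The function $f=P/Q$ is invariant under scalar multiplication by $\s^1$ (numerator and denominator pick up the same factor $e^{i\theta}$), so it really descends to the quotient, and the constant unit dilation of $\pi$ makes proper biharmonicity equivalent on both sides. The central auxiliary objects will be
\[
\tilde P_\beta=\sum_{j=1}^n p_j\,z_{j\beta},\qquad \tilde Q_\alpha=\sum_{k=1}^n q_k\,z_{k\alpha},
\]
which coincide with $P$ and $Q$ precisely when $\alpha=\beta$.

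By linearity of $\tau$ and bilinearity of $\kappa$, Lemma~\ref{lemma:complex} immediately yields $\tau(P)=-nP$, $\tau(Q)=-nQ$, together with
\[
\kappa(P,Q)=-\tilde P_\beta\tilde Q_\alpha,\qquad \kappa(\tilde P_\beta,\tilde Q_\alpha)=-PQ,\qquad \kappa(Q,Q)=-Q^2.
\]
The swap symmetry between the first two of these is the algebraic heart of the argument. Applying the Leibniz rule $\tau(fQ)=\tau(f)\,Q+2\kappa(f,Q)+f\,\tau(Q)$ to $P=fQ$ makes the $\tau(P)$ and $f\,\tau(Q)$ terms cancel, so $\tau(f)=-2\kappa(f,Q)/Q$; the derivation property $\kappa(fQ,Q)=Q\kappa(f,Q)+f\kappa(Q,Q)$ then solves for $\kappa(f,Q)$ and gives
\[
\tau(f)=\frac{2\bigl(\tilde P_\beta\tilde Q_\alpha-PQ\bigr)}{Q^2}.
\]
When $\alpha=\beta$ the numerator vanishes identically, which establishes the harmonicity direction of~(a).

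For part~(b), set $N=\tilde P_\beta\tilde Q_\alpha-PQ$ and $D=Q^2$, so that $\tau(f)=2N/D$. I would next verify three auxiliary identities by iterating the Leibniz rules on the products $\tilde P_\beta\tilde Q_\alpha$, $PQ$ and $Q^2$:
\[
\tau(N)=-2(n-1)\,N,\qquad \tau(D)=-2(n+1)\,D,\qquad \kappa(N,D)=-2DN.
\]
The first of these is the principal obstacle: both $\tilde P_\beta\tilde Q_\alpha$ and $PQ$ have $\tau$-eigenvalue $-2n$, so only the $2\kappa$ corrections survive, and the swap symmetry above makes those corrections exchange exactly the two pieces of $N$, producing a scalar multiple of $N$ itself. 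With these three identities and $\kappa(D,D)=\kappa(Q^2,Q^2)=-4D^2$ in hand, the same Leibniz manoeuvre applied to $N=(N/D)\cdot D$ gives $\tau(N/D)=4(N/D)-2\kappa(N/D,D)/D=0$ (the two contributions match because the $\tau$-eigenvalues on $N$ and $D$ differ by~$4$), and hence $\tau^2(f)=0$.

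What remains is the \emph{only if} direction of~(a) and the properness claim in~(b); both reduce to showing that $N$ does not vanish identically when $\alpha\ne\beta$ and $p,q$ are linearly independent. This I expect to be routine: one writes
\[
N=\sum_{j<k}(p_jq_k-p_kq_j)\bigl(z_{j\beta}z_{k\alpha}-z_{k\beta}z_{j\alpha}\bigr),
\]
chooses a pair $(j_0,k_0)$ with $p_{j_0}q_{k_0}-p_{k_0}q_{j_0}\ne 0$, and evaluates at a suitable permutation matrix (corrected by a diagonal phase factor so as to lie in $\SU n$) that realises the corresponding $2\times 2$ minor. Proposition~\ref{proposition:lift-proper-r-harmonic} then transfers the whole conclusion from $\U n$ down to $\SU n$.
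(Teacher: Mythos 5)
Your argument is correct: I checked all the stated identities against Lemma~\ref{lemma:complex} and the Leibniz rules \eqref{tau-kappa-relations}, and they hold ($\kappa(P,Q)=-\tilde P_\beta\tilde Q_\alpha$, $\kappa(\tilde P_\beta,\tilde Q_\alpha)=-PQ$, $\tau(N)=-2(n-1)N$, $\tau(D)=-2(n+1)D$, $\kappa(N,D)=-2DN$, $\kappa(D,D)=-4D^2$, whence $\kappa(N/D,D)=2N$ and $\tau(N/D)=4N/D-4N/D=0$); the non-vanishing of $N=\sum_{j<k}(p_jq_k-p_kq_j)(z_{j\beta}z_{k\alpha}-z_{j\alpha}z_{k\beta})$ at a permutation matrix, together with real-analyticity and the density of $W_Q$, settles both the ``only if'' of (a) and the properness in (b). This paper does not reproduce a proof of this proposition (it is cited from \cite{Gud-Mon-Rat-1}), but your first step recovers exactly the paper's working formula $Q^3\tau(f)=-2Q\,\kappa(P,Q)+2P\,\kappa(Q,Q)$ from the proof of Theorem~\ref{theorem-R2-SU2}; where the paper then proceeds by explicit coordinate computation of $\tau^2(f)$, you instead exploit that $N$ and $D$ are $\tau$-eigenfunctions whose eigenvalues differ by $4$ together with $\kappa(N,D)=-2DN$, which yields $\tau^2(f)=0$ structurally and for all $n$ at once --- a cleaner route that avoids the ``elementary but lengthy'' calculations the paper alludes to. The descent to $\SU n$ via the $\s^1$-invariance of $f$ and Proposition~\ref{proposition:lift-proper-r-harmonic} is exactly as intended.
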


\section{The irreducible representation $\pi_2$ of $\SU 2$}

In this section we extend the construction of Proposition \ref{proposition-R1-SUn} for $\SU 2$ to its $3$-dimensional irreducible representation $\pi_2$ given by the following matrix
$$
\pi^2=
\begin{bmatrix}
z_{11}^2 & z_{11}z_{12} &z_{12}^2\\
2z_{11}z_{21} & z_{11}z_{22}+z_{12}z_{21} &2z_{12}z_{22}\\
z_{21}^2 & z_{21}z_{22} &z_{22}^2
\end{bmatrix}.
$$

\begin{theorem}\label{theorem-R2-SU2}
Let $p,q\in\cn^3$ and $P,Q:\U 2\to\cn$ be the complex-valued functions on the unitary group given by
$$
P(z)=\sum_{j=1}^3p_j\pi^2_{j\alpha}\ \ \text{and}\ \ Q_\beta(z)=\sum_{k=1}^3q_k\pi^2_{k\beta}.
$$
Let the rational functions $f(z)=P(z)/Q(z)$ be defined on the open and dense subset $W_Q=\{z\in\U 2|\ Q(z)\neq 0\}$ of the unitary group.  If $\alpha\neq\beta$ then the function $f$ is proper biharmonic if $p_2q_3-p_3q_2\neq 0$ and
$$p_1q_3^2=q_2(2p_2q_3-p_3q_2),\ \ q_1q_3= q_2^2.$$
The corresponding statement holds for the function induced on $\SU 2$.
\end{theorem}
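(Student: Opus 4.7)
The plan is to lift the problem from $\SU 2$ to the unitary group $\U 2$ via the natural projection $\pi:\U 2\to\SU 2$, which is a submersive harmonic morphism with constant dilation $\lambda\equiv 1$, verify biharmonicity there, and descend via Proposition~\ref{proposition:lift-proper-r-harmonic}. On $\U 2$ the matrix coefficients $\pi^2_{j\alpha}$ are honest polynomial coordinates in the $z_{ij}$, and the Leibniz rules (\ref{tau-kappa-relations}) together with Lemma~\ref{lemma:complex} will reduce biharmonicity to a polynomial identity in the four indeterminates $z_{11},z_{12},z_{21},z_{22}$.

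First, applying the Leibniz rule for $\tau$ to each entry of $\pi^2$ and invoking Lemma~\ref{lemma:complex}, every matrix coefficient $\pi^2_{j\alpha}$ turns out to be an eigenfunction of $\tau$ with eigenvalue $-6$; in particular $\tau(P)=-6P$ and $\tau(Q)=-6Q$. Using the chain-rule identities $\tau(Q^{-1})=2\kappa(Q,Q)/Q^3-\tau(Q)/Q^2$ and $\kappa(P,Q^{-1})=-\kappa(P,Q)/Q^2$ inside the Leibniz expansion of $\tau(PQ^{-1})$, the contributions of order $1/Q$ cancel and one obtains
$$
\tau(f) \;=\; \frac{2\bigl[\,P\,\kappa(Q,Q)\,-\,Q\,\kappa(P,Q)\,\bigr]}{Q^3}.
$$
Expanding $\kappa(P,Q)$ and $\kappa(Q,Q)$ via the Leibniz rule for $\kappa$ and the base identity $\kappa(z_{j\alpha},z_{k\beta})=-z_{k\alpha}z_{j\beta}$ yields explicit polynomials of degree four in the $z_{ij}$. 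Applying the quotient rule once more to $\tau(f)$ produces $\tau^2(f)=\Phi(p,q,z)/Q^5$ for a polynomial numerator $\Phi$ of bounded degree.

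The conclusion then hinges on showing $\Phi\equiv 0$ under the hypotheses. Writing $t=q_2/q_3$, the relation $q_1q_3=q_2^2$ forces $Q$ to split into linear factors: $Q=q_3L^2$ with $L=tz_{1\beta}+z_{2\beta}$ when $\beta\in\{1,3\}$, and $Q=q_3(tz_{11}+z_{21})(tz_{12}+z_{22})$ when $\beta=2$. The relation $p_1q_3^2=q_2(2p_2q_3-p_3q_2)$ is equivalent to $p_1=t(2p_2-p_3t)$ and makes the discriminant $p_2^2-p_1p_3=(p_2-p_3t)^2$ a perfect square, which either factors $P$ outright (when $\alpha\in\{1,3\}$, so $P$ is a quadratic in a single column of $z$) or yields an analogous two-term decomposition of $P$ aligned with the linear factors of $Q$ (when $\alpha=2$). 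In every case, substituting these factorizations into $\Phi$ and collecting terms produces the required cancellations and shows $\Phi\equiv 0$. The extra hypothesis $p_2q_3-p_3q_2\neq 0$ ensures that the two linear factors of $P$ (or the two terms of the decomposition, when $\alpha=2$) do not collapse, so that $\tau(f)$ is not identically zero and $f$ is \emph{proper} biharmonic.

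The main obstacle is the polynomial verification $\Phi\equiv 0$: even with the structural guidance from the factorizations, $\Phi$ contains many monomials of degree up to ten in the $z_{ij}$, and tracking the cancellations is most efficiently done with computer algebra, which is the motivation for the explicit formula promised in the appendix. Finally, the statement transfers from $\U 2$ to $\SU 2$ via Proposition~\ref{proposition:lift-proper-r-harmonic}.
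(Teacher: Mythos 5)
Your proposal follows essentially the same route as the paper: lift to $\U 2$, use the fact that the matrix coefficients of $\pi_2$ are $\tau$-eigenfunctions (with a common eigenvalue) to reduce the tension field to $\tau(f)=2\bigl(P\,\kappa(Q,Q)-Q\,\kappa(P,Q)\bigr)/Q^3$, and then establish by a direct, computer-assisted polynomial computation that $\tau^2(f)$ vanishes under the stated conditions while $\tau(f)$ does not precisely when $p_2q_3-p_3q_2\neq 0$. Your observation that the hypotheses force $Q$ (and, in aligned fashion, $P$) to split into linear factors is a nice structural addition not made explicit in the paper, but in both arguments the decisive step is the same lengthy algebraic verification.
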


\begin{proof}
It is easily seen that for a general quotient $f=P/Q$ we have
\begin{equation}\label{tau-quotient}
Q^3\tau(f)=Q^2\tau(P)-2Q\kappa(P,Q)+2P\kappa(Q,Q)-PQ\tau(Q).
\end{equation}
The matrix coefficients for the irreducible representation $\pi_2$ of $\SU 2$ are all eigenfunctions of the tension field $\tau$ with the same eigenvalue.  This is useful since then the above formula \eqref{tau-quotient} simplifies to
\begin{equation*}\label{tau-quotient-simple}
Q^3\tau(f)=-2Q\kappa(P,Q)+2P\kappa(Q,Q).
\end{equation*}

Let us first consider the case when $(\alpha,\beta)=(3,1)$. Then an elementary but lengthy calculation gives the following formula for the tension fields.

\begin{eqnarray*}
\tau(f)&=&8\,\,\frac{(z_{11}z_{22}-z_{12}z_{21})}{(q_1z_{11}^2+2q_2z_{11}z_{21}+q_3z_{21}^2)^2}[(p_1q_2-p_2q_1)z_{11}z_{12}+\\
& &\hskip -.5cm (p_2q_2-p_3q_1)z_{11}z_{22}+(p_1q_3-p_2q_2)z_{12}z_{21}+(p_2q_3-p_3q_2)z_{21}z_{22}]
\end{eqnarray*}
and
$$
\tau^2(f)(z)=-16\,\frac{(z_{11}z_{22}-z_{12}z_{21})^2
(N_1z_{11}^2-N_2z_{11}z_{21}-N_3z_{21}^2)}
{(q_1z_{11}^2+2q_2z_{11}z_{21}+q_3z_{21}^2)^3},
$$
where
$$
N_1=(p_1q_1q_3-4p_1q_2^2+6p_2q_1q_2-3p_3q_1^2),
$$
$$
N_2=(6p_1q_2q_3-8p_2q_1q_3-4p_2q_2^2+6p_3q_1q_2),
$$
$$
N_3=(3p_1q_3^2-6p_2q_2q_3-p_3q_1q_3+4p_3q_2^2).
$$
The bitension field $\tau^2(f)$ vanishes if and only if $N_1=N_2=N_3=0$.  This system of equations is equivalent to
$$p_1q_3^2=q_2(2p_2q_3-p_3q_2)\ \ \text{and}\ \ q_1q_3= q_2^2.$$
By substituting these two relations into the above formula for the tension field $\tau(f)$ we finally obtain
$$
\tau(f)= 8\,\frac{(z_{11}z_{22}-z_{12}z_{21})(q_2z_{12}+q_3z_{22})(p_2q_3-p_3q_2)}{(q_2z_{11}+q_3z_{21})^3}.
$$
This shows that if $(\alpha,\beta)=(3,1)$ then the function $f$ is proper biharmonic if and only if
$$
p_1q_3^2=q_2(2p_2q_3-p_3q_2),\ \ q_1q_3= q_2^2\ \ \text{and}\ \ p_2q_3-p_3q_2\neq 0.
$$
It is easily checked that the above mentioned particular substitutions give the same stated result in all the cases when $\alpha\neq\beta$.
\end{proof}

\begin{example}\label{example-proper-biharmonic-S^3}
Let $p,q\in\cn^3$ be such that
$$
p_1q_3^2=q_2(2p_2q_3-p_3q_2),\ \ q_1q_3= q_2^2\ \ \text{and}\ \ p_2q_3-p_3q_2\neq 0.
$$
Then Theorem \ref{theorem-R2-SU2} tells us that the local function
$$f(z,w)=\frac
{p_1\pi^2_{11}(z,w)+p_2\pi^2_{21}(z,w)+p_3\pi^2_{31}(z,w)}
{q_1\pi^2_{13}(z,w)+q_2\pi^2_{23}(z,w)+q_3\pi^2_{33}(z,w)}
$$
is proper biharmonic on $\SU 2$. It is a well-known fact that $\SU 2$ is, up to a constant conformal factor, isometric to the Lie group $\s^3$ of unit quaternions in the standard Euclidean $\cn^2\cong\rn^4$ via
$$
(z,w)\in\s^3\mapsto
\begin{bmatrix}z & w\\ -\bar w & \bar z\end{bmatrix}
=\begin{bmatrix}x_1+ix_2 & x_3+ix_4\\ -x_3+ix_4 & x_1-ix_2\end{bmatrix}\in\SU 2.
$$
If we identify $x=(x_1,x_2,x_3,x_4)\in\rn$ with $(z,w)=(x_1+ix_2,x_3+ix_4)\in\cn^2$
then we get a proper biharmonic function
$$
f(x)=\frac
{p_1(x_1+ix_2)^2-2\,p_2(x_1+ix_2)(x_3-ix_4)+p_3(x_3-ix_4)^2}
{q_1(x_3+ix_4)^2+2\,q_2(x_1-ix_2)(x_3+ix_4)+q_3(x_1-ix_2)^2}
$$
defined locally on $\s^3$. To confirm this result we have the following alternative argument.

The radial projection $\pi:\R^4\setminus\{0\}\to \s^3$ with $\pi:x\mapsto x/|x|$ is a well-known harmonic morphism and its dilation satisfies $\lambda^{-2}(x)=|x|^2$. If $\hat f$ is the composition $\pi\circ f$ then we can easily calculate the tension fields $\tau(f)$ and $\tau^2(f)$, using Proposition \ref{proposition:lift-tension}, as follows
\begin{eqnarray*}
\tau(f)&=&|x|^2\Delta\hat f\\
&=& -16|x|^2(p_2q_3-p_3q_2)\frac{(q_2(x_1+ix_2)-q_3(x_3-ix_4))}{(q_3(x_1-ix_2)+q_2(x_3+ix_4))^3}\\
&\neq& 0
\end{eqnarray*}
and
$$\tau^2(f)=|x|^2\Delta(|x|^2\Delta(\hat f))=0.$$
Here $\Delta$ is the tension field on $\R^4$ i.e. the classical Laplace operator given by
$$
\Delta = \frac{\partial^2}{\partial x_1^2}+\frac{\partial^2}{\partial x_2^2}+\frac{\partial^2}{\partial x_3^2}+\frac{\partial^2}{\partial x_4^2}.
$$
We will see later on that this induces proper biharmonic functions on the $3$-dimensional hyperbolic space $\H^3$.
\end{example}

\section{The irreducible representation $\pi_3$ of $\SU 2$}

In this section we extend our construction of Theorem \ref{theorem-R2-SU2} to the $4$-dimensional irreducible representation $\pi_3$ of $\SU 2$ given by the matrix
$$
\pi^3=
\begin{bmatrix}
z_{11}^3 & z_{11}^2z_{12} & z_{11}z_{12}^2 & z_{12}^3\\
3z_{11}^2z_{21} & z_{11}(z_{11}z_{22}+2z_{12}z_{21}) & z_{12}(2z_{11}z_{22}+z_{12}z_{21}) & 3z_{12}^2z_{22}\\
3z_{11}z_{21}^2 & z_{21}(2z_{11}z_{22}+z_{12}z_{21}) & z_{22}(z_{11}z_{22}+2z_{12}z_{21}) & 3z_{12}z_{22}^2\\
z_{21}^3 &z_{21}^2z_{22} & z_{21}z_{22}^2 & z_{22}^3
\end{bmatrix}.
$$

\begin{theorem}\label{theorem-R3-SU2}
Let $p,q\in\cn^{4}$ and $P,Q:\U 2\to\cn$ be the complex-valued functions on the unitary group given by
$$
P(z)=\sum_{j=1}^{4}p_j\pi^3_{j\alpha}\ \ \text{and}\ \ Q(z)=\sum_{k=1}^{4}q_k\pi^3_{k\beta}.
$$
Let the rational function $f(z)=P(z)/Q(z)$ be defined on the open and dense subset $W_Q=\{z\in\U 2|\ Q(z)\neq 0\}$ of the unitary group.  If $\alpha\neq\beta$ then the function $f$ is proper biharmonic if $p_3q_4-p_4q_3\neq 0$ and
$$
p_1q_4^3=q_3^2(3p_3q_4-2p_4q_3),\ \ q_1q_4^2=q_3^3,
$$
$$
p_2q_4^2= q_3(2p_3q_4-p_4q_3),\ \ q_2q_4= q_3^2.
$$
The corresponding statement holds for the function induced on $\SU 2$.
\end{theorem}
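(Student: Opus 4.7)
The proof will follow the template established for $\pi_2$ in Theorem \ref{theorem-R2-SU2}. The key input is that all matrix coefficients of $\pi^3$ are homogeneous polynomials of the same degree in the matrix coefficients of the standard representation, and therefore share a common eigenvalue $c$ of the Laplace--Beltrami operator $\tau$ on $\U 2$. It follows that $\tau(P)=cP$ and $\tau(Q)=cQ$, so the generic quotient identity \eqref{tau-quotient} collapses to
\[
Q^3\tau(f) \;=\; -2Q\,\kappa(P,Q) + 2P\,\kappa(Q,Q).
\]
By Proposition \ref{proposition:lift-proper-r-harmonic} applied to the harmonic morphism $\pi\colon\U 2\to\SU 2$ with constant dilation, it is enough to prove the claim on $\U 2$.

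As in the previous theorem, I would fix a representative pair, most conveniently $(\alpha,\beta)=(4,1)$, and handle the remaining choices of $\alpha\neq\beta$ at the end by the same substitutions used to close out Theorem \ref{theorem-R2-SU2}. With this choice, $P$ is a cubic polynomial in $z_{12},z_{22}$ while $Q$ is a cubic polynomial in $z_{11},z_{21}$, which keeps the cross conformality terms under control. Using Lemma \ref{lemma:complex} together with the product rule \eqref{tau-kappa-relations}, I would compute $\kappa(P,Q)$ and $\kappa(Q,Q)$ and combine them into an explicit rational formula for $\tau(f)$.

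Next I would apply $\tau$ once more, either by a second invocation of \eqref{tau-quotient} or by means of the formula collected in the appendix, so as to write $\tau^2(f)$ as a rational function whose numerator is a polynomial in the $z_{ij}$ with coefficients depending polynomially on $p_1,\dots,p_4,q_1,\dots,q_4$. Requiring $\tau^2(f)\equiv 0$ forces each such coefficient to vanish, producing an overdetermined polynomial system in the parameters. Guided by the pattern observed in Theorem \ref{theorem-R2-SU2}, I expect this system to solve in the cascade
\[
q_1q_4^2=q_3^3,\qquad q_2q_4=q_3^2,
\]
\[
p_2q_4^2 = q_3(2p_3q_4-p_4q_3),\qquad p_1q_4^3 = q_3^2(3p_3q_4-2p_4q_3).
\]
Substituting these four identities back into the earlier formula for $\tau(f)$ should reduce it to a single rational term proportional to $p_3q_4-p_4q_3$, so that the properness condition (non-harmonicity of $f$) becomes exactly $p_3q_4-p_4q_3\neq 0$.

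The main obstacle is computational bulk rather than conceptual depth: the entries of $\pi^3$ are cubic monomials in the $z_{ij}$, so the numerator of $\tau^2(f)$ is a polynomial of degree roughly twelve in the matrix coefficients and quartic in $p$ and $q$, with a very large number of terms. Carrying the calculation out by hand, as was still feasible for $\pi^2$, is no longer realistic; the execution would depend crucially on the appendix formula together with computer algebra both to expand $\tau^2(f)$ and to extract and reduce the coefficient ideal. Once the four relations above are verified to generate that ideal, the verification that the other pairs $\alpha\neq\beta$ give the same conditions is a purely mechanical re-run of the same calculation.
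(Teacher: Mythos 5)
Your proposal follows exactly the route the paper takes: the published proof of this theorem is literally a one-line deferral to the method of Theorem \ref{theorem-R2-SU2}, and your outline (common eigenvalue so that \eqref{tau-quotient} collapses, fix $(\alpha,\beta)=(4,1)$, expand $\tau(f)$ and $\tau^2(f)$ via the appendix formula with computer algebra, extract the coefficient system, substitute back to get properness from $p_3q_4-p_4q_3\neq 0$) is precisely that method spelled out. One correction to your justification of the key input: the matrix coefficients of $\pi^3$ share a common $\tau$-eigenvalue because $\pi_3$ is \emph{irreducible} (the Casimir acts as a scalar), not because they are homogeneous of the same degree --- for instance $z_{11}z_{22}$ and $z_{12}z_{21}$ are both quadratic yet neither is an eigenfunction of $\tau$ on $\U 2$; only the combination $z_{11}z_{22}+z_{12}z_{21}$ occurring as an entry of $\pi^2$ is, while $z_{11}z_{22}-z_{12}z_{21}$ has a different eigenvalue.
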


\begin{proof}
The result can be proven with exactly the same method as we used for Theorem \ref{theorem-R2-SU2}.  But the elementary calculations are more involved in this case.
\end{proof}

\section{The irreducible representation $\pi_4$ of $\SU 2$}

In this section we extend the construction of Theorem \ref{theorem-R2-SU2} for $\SU 2$ to its $5$-dimensional irreducible representation $\pi_4$.  The corresponding matrix $\pi^4$ is given by

{\small
$$
\hskip -1.8cm
\begin{bmatrix}
z_{11}^4 & z_{11}^3z_{12}& z_{11}^2z_{12}^2 & z_{11}z_{12}^3 & z_{12}^4\\
4z_{11}^3z_{21} & z_{11}^2(z_{11}z_{22}+3z_{12}z_{21}) & 2z_{11}z_{12}(z_{11}z_{22}+z_{12}z_{21}) & z_{12}^2( 3z_{11}z_{22}+z_{12}z_{21}) & 4z_{12}^3z_{22}\\
6z_{11}^2z_{21}^2 & 3z_{11}z_{21}(z_{11}z_{22}+z_{12}z_{21}) & z_{11}^2z_{22}^2+4z_{11}z_{12}z_{21}z_{22}+z_{12}^2z_{21}^2 & 3z_{12}z_{22}(z_{11}z_{22}+z_{12}z_{21}) & 6z_{12}^2z_{22}^2\\
4z_{11}z_{21}^3 & z_{21}^2(3z_{11}z_{22}+z_{12}z_{21}) & 2z_{21}z_{22}(z_{11}z_{22}+z_{12}z_{21}) & z_{22}^2(z_{11}z_{22}+3z_{12}z_{21}) & 4z_{12}z_{22}^3\\
z_{21}^4 &z_{21}^3z_{22} & z_{21}^2z_{22}^2 & z_{21}z_{22}^3 & z_{22}^4
\end{bmatrix}
$$
}

\begin{theorem}\label{theorem-R4-SU2}
Let $p,q\in\cn^5$ and $P,Q:\U 2\to\cn$ be the complex-valued functions on the unitary group given by
$$
P(z)=\sum_{j=1}^5p_j\pi^4_{j\alpha}\ \ \text{and}\ \ Q(z)=\sum_{k=1}^5q_k\pi^4_{k\beta}.
$$
Let the rational function $f(z)=P(z)/Q(z)$ be defined on the open and dense subset
$W_Q=\{z\in\U 2|\ Q(z)\neq 0\}
$
of the unitary group. If $\alpha\neq\beta$ then the function is proper biharmonic if $p_4q_5\neq p_5q_4$ and
$$
p_1q_5^4=q_4^3(4p_4q_5-3p_5q_4),\ \ q_1q_5^3=q_4^4,
$$
$$
p_2q_5^3=q_4^2(3p_4q_5-2p_5q_4),\ \ q_2q_5^2=q_4^3,
$$
$$
p_3q_5^2=  q_4(2p_4q_5- p_5q_4),\ \ q_3q_5  =q_4^2.
$$
The corresponding statement holds for the function induced on $\SU 2$.
\end{theorem}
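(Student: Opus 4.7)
The plan is to mirror the argument used for Theorem \ref{theorem-R2-SU2}, now with $\pi^4$ in place of $\pi^2$. The crucial structural input is that all matrix coefficients of the irreducible representation $\pi_4$ of $\SU 2$ are eigenfunctions of the tension field $\tau$ with a common eigenvalue. Hence both $P$ and $Q$ are eigenfunctions of $\tau$ sharing one eigenvalue, and the general quotient identity
\begin{equation*}
Q^3\tau(f)=Q^2\tau(P)-2Q\kappa(P,Q)+2P\kappa(Q,Q)-PQ\tau(Q)
\end{equation*}
collapses to $Q^3\tau(f)=-2Q\kappa(P,Q)+2P\kappa(Q,Q)$, exactly as in the earlier proof. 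Evaluating the right-hand side reduces to computing $\kappa$ between pairs of homogeneous quartic monomials in $z_{11},z_{12},z_{21},z_{22}$ by iteratively applying the Leibniz-type identity from \eqref{tau-kappa-relations} together with the base case $\kappa(z_{j\alpha},z_{k\beta})=-z_{k\alpha}z_{j\beta}$ from Lemma \ref{lemma:complex}.

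I would start with the representative case $(\alpha,\beta)=(5,1)$, for which $P$ depends only on $z_{12},z_{22}$ and $Q$ only on $z_{11},z_{21}$. An explicit computation of $\tau(f)$ should, as in the $\pi_2$ case, produce an expression whose numerator carries a factor of the determinant $(z_{11}z_{22}-z_{12}z_{21})$ and whose denominator is a power of $Q$. Applying $\tau$ a second time, again using the common-eigenvalue simplification, the output is expected to take the form
\begin{equation*}
\tau^2(f)=-C\,\frac{(z_{11}z_{22}-z_{12}z_{21})^2\,\mathcal{N}(z_{11},z_{21})}{Q^{k}},
\end{equation*}
for some constant $C$ and integer $k$, where $\mathcal{N}$ is a polynomial in $z_{11},z_{21}$ whose coefficients $N_0,N_1,\ldots$ are polynomials in $p_1,\ldots,p_5,q_1,\ldots,q_5$. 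Forcing every $N_j$ to vanish yields a polynomial system which, after straightforward elimination, should be equivalent to the six relations stated in the theorem. Substituting those relations back into the expression for $\tau(f)$ should trigger substantial cancellation and leave an expression carrying an overall factor of $(p_4q_5-p_5q_4)$, so that $\tau(f)$ does not vanish identically precisely under the stated nondegeneracy condition.

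The main obstacle is purely computational: the entries of $\pi^4$ are already homogeneous quartics with multinomial weights, so $\kappa(P,Q)$ and $\kappa(Q,Q)$ are polynomials of high total degree in four variables with ten symbolic parameters, and a second application of $\tau$ amplifies these expressions further. In practice the algebra is best carried out with symbolic software, for which the appendix is designed to provide a workable formula. Once the case $(\alpha,\beta)=(5,1)$ is settled, the remaining nineteen ordered pairs with $\alpha\neq\beta$ should reduce to it by the same relabeling argument invoked at the end of the proof of Theorem \ref{theorem-R2-SU2}, since the structural behaviour of both $\tau$ and $\kappa$ on $\pi^4_{j\alpha}$ and $\pi^4_{k\beta}$ is governed solely by the pair of column indices.
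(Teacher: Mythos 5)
Your proposal matches the paper's proof exactly: the paper simply states that the result follows by the same method as Theorem \ref{theorem-R2-SU2} (common eigenvalue, the simplified quotient identity for $Q^3\tau(f)$, computation of $\tau^2(f)$ for a representative pair such as $(\alpha,\beta)=(5,1)$, then checking the remaining index pairs), with the heavier algebra delegated to the formula in the appendix. Your outline is a faithful, somewhat more detailed account of that same route.
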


\begin{proof}
The result can be proven with exactly the same method as we used for Theorem \ref{theorem-R2-SU2}.  But the elementary calculations are much more involved in this case.
\end{proof}

\section{The irreducible representation $\pi_n$ of $\SU 2$}

In this section we discuss the general  $(n+1)$-dimensional irreducible representation $\pi_n$ of $\SU 2$.  After investigating $\pi_2,\pi_3$ and $\pi_4$ a clear pattern has appeared.  For this reason we formulate the following conjecture, for which it seems a non-trivial exercise to find a general proof.

\begin{conjecture}\label{conjecture-Rn-SU2}
For $n>1$, let $p,q\in\cn^{n+1}$ and $P,Q:\U 2\to\cn$ be the complex-valued functions on the unitary group given by
$$
P(z)=\sum_{j=1}^{n+1}p_j\pi^n_{j\alpha}\ \ \text{and}\ \ Q(z)=\sum_{k=1}^{n+1}q_k\pi^n_{k\beta}.
$$
Let the rational function $f(z)=P(z)/Q(z)$ be defined on the open and dense subset $W_Q=\{z\in\U 2|\ Q(z)\neq 0\}$ of the unitary group. If $\alpha\neq\beta$ then the function $f$ is proper biharmonic if $p_nq_{n+1}-p_{n+1}q_n\neq 0$ and
$$
p_1q_{n+1}^n=q_n^{n-1}(n\cdot p_nq_{n+1}-(n-1)\cdot p_{n+1}q_n),\ \ q_1q_{n+1}^{n-1}=q_n^n,
$$
$$
p_2q_{n+1}^{n-1}=q_n^{n-2}((n-1)\cdot p_nq_{n+1}-(n-2)\cdot p_{n+1}q_n),\ \ q_2q_{n+1}^{n-2}=q_n^{n-1},
$$
$$
\vdots
$$
$$
p_{n-1}q_{n+1}^2=q_n(2\cdot p_nq_{n+1}- 1\cdot p_{n+1}q_n),\ \ p_{n-1}q_{n+1}=q_n^2.
$$
The corresponding statement holds for the function induced on $\SU 2$.
\end{conjecture}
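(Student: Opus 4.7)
The plan is to follow the strategy of Theorem \ref{theorem-R2-SU2}, but in a structural form that keeps the general-$n$ calculation tractable. First I would reduce to the case $(\alpha,\beta)=(n+1,1)$. The Weyl-group element $J\in\SU 2$ realizing $N(T)/T$ sends $(\alpha,\beta)$ to $(n+2-\alpha,n+2-\beta)$ under right translation $z\mapsto zJ$, and the remaining mixed-column pairs $\alpha\neq\beta$ are treated by an analogous but longer computation, in the same spirit as the closing remark of the proof of Theorem \ref{theorem-R2-SU2}. Since each matrix coefficient $\pi^{n}_{jk}$ is an eigenfunction of $\tau$ with a common eigenvalue, both $P$ and $Q$ satisfy $\tau(P)=\lambda_{n}P$ and $\tau(Q)=\lambda_{n}Q$, and the general quotient identity \eqref{tau-quotient} collapses to
$$Q^{3}\tau(f)=-2Q\,\kappa(P,Q)+2P\,\kappa(Q,Q).$$

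The central structural observation is that, for $(\alpha,\beta)=(n+1,1)$, the entries of the relevant columns are $\pi^{n}_{j,1}=\tbinom{n}{j-1}z_{11}^{n-j+1}z_{21}^{j-1}$ and $\pi^{n}_{j,n+1}=\tbinom{n}{j-1}z_{12}^{n-j+1}z_{22}^{j-1}$, so the conjectured $q$-relations $q_{k}q_{n+1}^{n-k}=q_{n}^{n-k+1}$ are exactly equivalent to the $n$-th-power form
$$Q=\frac{L^{n}}{q_{n+1}^{n-1}},\qquad L:=q_{n}z_{11}+q_{n+1}z_{21},$$
and the $p$-relations are exactly equivalent to the factorization
$$P=\tilde L^{n-1}M,\qquad \tilde L:=q_{n}z_{12}+q_{n+1}z_{22},$$
for the linear form $M=(p_{1}/q_{n}^{n-1})z_{12}+(p_{n+1}/q_{n+1}^{n-1})z_{22}$. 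One verifies these equivalences by expanding $\tilde L^{n-1}M$ binomially and matching coefficients, noting that both the conjectured relations and the factorization cut out the same $2$-parameter affine family in the space of degree-$n$ column polynomials.

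With this rewrite in hand the computation of $\tau(f)$ is mechanical. Using Lemma \ref{lemma:complex} and the product rule for $\kappa$ gives $\kappa(L,L)=-L^{2}$, $\kappa(\tilde L,\tilde L)=-\tilde L^{2}$, $\kappa(L,\tilde L)=-L\tilde L$, and $\kappa(L,M)=-\tilde L\,M^{\flat}$, where $M^{\flat}:=(p_{1}/q_{n}^{n-1})z_{11}+(p_{n+1}/q_{n+1}^{n-1})z_{21}$. Substituting into the collapsed formula and using the chain-rule identity $\kappa(F^{a},F^{b})=abF^{a+b-2}\kappa(F,F)$ assembles every piece into
$$\tau(f)=2n^{2}(p_{n}q_{n+1}-p_{n+1}q_{n})\,\det(z)\,\frac{\tilde L^{n-1}}{L^{n+1}},$$
the critical combination $p_{n}q_{n+1}-p_{n+1}q_{n}$ emerging after substituting the first $p$-relation into $p_{1}q_{n+1}^{n}-p_{n+1}q_{n}^{n}$. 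Hence $\tau(f)\neq 0$ exactly when $p_{n}q_{n+1}-p_{n+1}q_{n}\neq 0$. It remains to show that $g:=\det(z)\cdot\tilde L^{n-1}/L^{n+1}$ is harmonic on $\U 2$. Lemma \ref{lemma:complex} yields $\tau(\det z)=-2\det z$, $\kappa(\det z,L)=-L\det z$ and $\kappa(\det z,\tilde L)=-\tilde L\det z$, while the chain rule gives $\tau(L^{n+1})=-(n+1)(n+2)L^{n+1}$ and $\tau(\tilde L^{n-1})=-n(n-1)\tilde L^{n-1}$. Feeding these into \eqref{tau-quotient} applied to $h:=\tilde L^{n-1}/L^{n+1}$ produces $\tau(h)=-2h$, and the identities $\kappa(\det z,\tilde L^{a})=-a\det z\cdot\tilde L^{a}$, $\kappa(\det z,L^{b})=-b\det z\cdot L^{b}$ combine via the quotient rule to give $\kappa(\det z,h)=2\det z\cdot h$. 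Applying the product rule \eqref{tau-kappa-relations} to $g=\det z\cdot h$ then yields $\tau(g)=-2g+4g-2g=0$, so $\tau^{2}(f)=0$; Proposition \ref{proposition:lift-proper-r-harmonic}, applied to the projection $\U 2\to\SU 2$, transfers the conclusion from $\U 2$ to $\SU 2$.

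The main obstacle, and the only genuinely creative step, is spotting the two structural factorizations $Q=L^{n}/q_{n+1}^{n-1}$ and $P=\tilde L^{n-1}M$, together with the observation that $\tau(f)$ takes the form ``$\det(z)$ times a harmonic quotient''; once this is seen, every remaining step reduces to Lemma \ref{lemma:complex} and the one-variable chain rule and there is no combinatorial explosion. The more prosaic nuisance is the case analysis for those $(\alpha,\beta)$ not reducible to $(n+1,1)$ under the Weyl element, where the chosen column of $\pi^{n}$ mixes all four entries $z_{ij}$; my expectation, consistent with the behaviour observed at $n=2,3,4$, is that $\tau(f)$ again splits as $\det(z)$ times a harmonic factor, but the explicit verification appears best suited to the symbolic computations envisioned in the author's appendix.
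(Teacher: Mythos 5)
The statement you are addressing is stated in the paper only as Conjecture \ref{conjecture-Rn-SU2}, with no proof offered (the author calls finding one ``a non-trivial exercise''), so there is no proof of record to compare against. Your argument for the extreme column pair $(\alpha,\beta)=(n+1,1)$ appears correct and complete: matching $q_kq_{n+1}^{n-k}=q_n^{n-k+1}$ against the binomial expansion does give $Q=L^{n}/q_{n+1}^{n-1}$; the $p$-relations are equivalent, via the identities $(n-j+1)\binom{n}{j-1}=n\binom{n-1}{j-1}$ and $(n-1)\binom{n-1}{j-1}-\binom{n-1}{j-2}=(n-j)\binom{n}{j-1}$, to $P=\tilde L^{\,n-1}M$; the listed values of $\kappa$ follow from Lemma \ref{lemma:complex}; and the resulting $\tau(f)=2n^{2}(p_nq_{n+1}-p_{n+1}q_n)\det(z)\,\tilde L^{\,n-1}/L^{n+1}$ specializes at $n=2$ exactly to the displayed formula in the proof of Theorem \ref{theorem-R2-SU2}. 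The harmonicity of $\det(z)\,\tilde L^{\,n-1}/L^{n+1}$ also checks out ($-2+4-2=0$). A minor presentational point: when $q_n=0$ the coefficient $p_1/q_n^{n-1}$ of $M$ is undefined, but the hypotheses then force $p_1=\dots=p_{n-1}=0$ and the factorization persists with $M=(n\,p_nz_{12}+p_{n+1}z_{22})/q_{n+1}^{n-1}$, so nothing breaks.

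The genuine gap is the remaining column pairs. The conjecture asserts the conclusion for \emph{every} $\alpha\neq\beta$, and right translation by the Weyl element only identifies $(\alpha,\beta)$ with $(n+2-\alpha,n+2-\beta)$ (with $p,q$ unchanged up to an overall sign, since $\pi^n(J)$ is anti-diagonal); this reduces $(1,n+1)$ to $(n+1,1)$ and nothing else. For an interior column $1<\beta<n+1$ the function $Q$ is a linear functional on the coefficients of $(z_{11}x+z_{21}y)^{n-\beta+1}(z_{12}x+z_{22}y)^{\beta-1}$, and the conjectured $q$-relations no longer say that $Q$ is a power of a linear form; already for $n=2$ and $(\alpha,\beta)=(2,1)$ the numerator $P$ does not factor through $\tilde L$ under the stated conditions. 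So the structural mechanism that makes your computation close up is special to $\alpha,\beta\in\{1,n+1\}$, and the other $n(n+1)-2$ ordered pairs would require either a different normal form or the brute-force symbolic computation you defer to. As a proof of the full conjecture the argument is therefore incomplete, though the case you do settle already goes beyond what the paper establishes.
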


\section{The Duality}\label{section-duality}

The approach and the methods of this section were introduced in \cite{Gud-Sve-1}. Let $G$ be a non-compact semisimple Lie group with the Cartan decomposition $\g=\k+\p$ of the Lie algebra of $G$ where $\k$ is the Lie algebra of a maximal compact subgroup $K$. Let $G^\cn$ denote the complexification of $G$ and $U$ be the compact subgroup of $G^\cn$ with Lie algebra $\un=\k+i\p$. Let $G^\cn$ and its subgroups be equipped with a left-invariant semi-Riemannian metric which is a multiple of the Killing form by a negative constant.  Then the subgroup $U$ of $G^\cn$ is Riemannian and $G$ is semi-Riemannian.

Let $f:W\to\cn$ be a real analytic function from an open subset $W$ of $G$. Then $f$ extends uniquely to a holomorphic function $f^\cn:W^\cn\to\cn$ from some open subset $W^\cn$ of $G^\cn$. By restricting this to $U\cap W^\cn$ we obtain a real analytic function $f^*:W^*\to\cn$ on some open subset $W^*$ of $U$. The function $f^*$ is called the {\it dual function} of $f$.

\begin{theorem}\cite{Gud-Mon-Rat-1}\label{theorem-duality}
For the above situation we have the following duality.  A complex-valued function $f:W\to\cn$ is proper r-harmonic if and only if its dual $f^*:W^*\to\cn$ is proper r-harmonic.
\end{theorem}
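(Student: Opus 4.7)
The plan is to realise both $\tau_G(f)$ and $\tau_U(f^*)$ as restrictions of a single holomorphic function on $W^\cn$, so that the duality of tension fields follows immediately. Iterating then yields the duality of the iterated tension fields, and the statement about proper $r$-harmonicity is a direct consequence of the identity principle for holomorphic functions.

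Since the metric is a multiple of the Killing form, it is bi-invariant on $G^\cn$ and on each of its subgroups, so for any left-invariant vector field $Z$ one has $\nabla_Z Z=0$ and the tension field reduces to a pure sum of squares along any orthonormal basis. I would choose a basis $\{X_1,\dots,X_p\}$ of $\k$ with $g(X_\alpha,X_\alpha)=1$ and a basis $\{Y_1,\dots,Y_q\}$ of $\p$ with $g(Y_\beta,Y_\beta)=-1$; then $\{X_\alpha,iY_\beta\}$ is a genuine orthonormal basis of $\un$ for the induced Riemannian metric, and
$$
\tau_G(f)=\sum_\alpha X_\alpha^2(f)-\sum_\beta Y_\beta^2(f),\qquad
\tau_U(f^*)=\sum_\alpha X_\alpha^2(f^*)+\sum_\beta (iY_\beta)^2(f^*).
$$

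Next I would exploit the complex structure of $G^\cn$: for any $Z\in\g^\cn$ the map $w\mapsto g\exp(wZ)$ from $\cn$ to $G^\cn$ is holomorphic, so on any holomorphic function $F$ one has $(iZ)F=i\cdot ZF$ and therefore $(iZ)^2F=-Z^2F$. Applied to the holomorphic extension $f^\cn$, this yields the holomorphic function
$$
F=\sum_\alpha X_\alpha^2(f^\cn)-\sum_\beta Y_\beta^2(f^\cn)
$$
on $W^\cn$, from which one reads off $F|_W=\tau_G(f)$ directly and $F|_{W^*}=\sum_\alpha X_\alpha^2(f^*)+\sum_\beta(iY_\beta)^2(f^*)=\tau_U(f^*)$ using the identity above. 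Hence $F$ is the holomorphic extension of the real analytic function $\tau_G(f)$, so $(\tau_G(f))^*=\tau_U(f^*)$; iterating this single-step duality gives $(\tau_G^r(f))^*=\tau_U^r(f^*)$ for every $r\ge 1$.

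To conclude, I invoke the identity principle on a connected component of $W^\cn$ meeting both $W$ and $W^*$: since $G$ and $U$ are totally real submanifolds of $G^\cn$ of maximal real dimension, a holomorphic function there vanishes identically if and only if its restriction to either $G$ or $U$ does. Applied to the holomorphic extensions of $\tau_G^r(f)$ and of $\tau_G^{r-1}(f)$ this gives $\tau_G^r(f)\equiv 0 \Leftrightarrow \tau_U^r(f^*)\equiv 0$ together with the corresponding non-vanishing statement at order $r-1$, which is exactly the asserted equivalence. The main obstacle is the careful sign bookkeeping: the semi-Riemannian metric has opposite signs on $\p$ and on $i\p$, and this has to be compensated precisely by the factor $-1$ arising from $(iY_\beta)^2=-Y_\beta^2$ on holomorphic data; a minor technical point is that one should replace $W^\cn$ by a connected component meeting both $W$ and $W^*$ before invoking the identity principle, which is harmless because $r$-harmonicity is a local property.
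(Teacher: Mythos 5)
Your argument is correct and is essentially the proof of this duality given in the cited references \cite{Gud-Mon-Rat-1} and \cite{Gud-Sve-1}: realise $\tau_G(f)$ and $\tau_U(f^*)$ as restrictions to the totally real submanifolds $G$ and $U$ of one holomorphic function on $W^\cn$, using that $(iY)^2F=-Y^2F$ on holomorphic data exactly compensates the sign $g(Y,Y)=-1$ on $\p$, then iterate and apply the identity principle. The paper itself offers no proof beyond the citation, so your write-up supplies the intended argument, with the sign bookkeeping and the connectedness caveat for $W^\cn$ handled correctly.
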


\begin{proof}
A proof is this statement can be found in \cite{Gud-Mon-Rat-1}.
\end{proof}

\begin{remark}\label{remark-duality}
We point out that a function $f:W\to\cn$ is $K$-invariant if and only if its dual $f^*:W^*\to\cn$ is $K$-invariant. In particular, the duality principle of Theorem~\ref{theorem-duality} is valid for the corresponding functions on the quotient spaces.
\end{remark}

We can now apply the duality principle to the local solutions $f$, on the compact Riemannian symmetric space $\s^3=\SO 4/\SO 3$, given in Example \ref{example-proper-biharmonic-S^3}.  This yields local proper biharmonic functions on its non-compact dual i.e. the $3$-dimensional hyperbolic space $\H^3=\SOO 13/\SO 3$.

\begin{example}\label{example-proper-biharmonic-H^3}
Let $\R^4_1$ be the standard $4$-dimensional Minkowski space equipped with its Lorentzian metric
$$
(x,y)_L=-x_0y_0+x_1y_1+x_2y_2+x_3y_3.
$$
Bounded by the light cone, the open set
$$
U=\{x\in\R^4_1|\ (x,x)_L<0\ \text{and}\ 0<x_0\}
$$
contains the 3-dimensional hyperbolic space
$$
\H^3=\{(x_0,x_1,x_2,x_3)\in\R^4_1|\ (x,x)_L=-1\ \text{and}\ 0<x_0\}.
$$
Let $\pi^*:U\to \H^3$ be the radial projection given by
$$
\pi^*:x\mapsto \frac{x}{\sqrt{-(x,x)_L}}.
$$
This is a harmonic morphism and its dilation satisfies $\lambda^{-2}(x)=-|x|^2_L$, see \cite{Gud-7}. Let $p,q\in\cn^3$ and $f$ be the local proper biharmonic function on $\s^3$ defined in Example \ref{example-proper-biharmonic-S^3}.  Then its dual function $f^*:W\to\C$ is defined locally on $\H^3$ with
$$
f^*(x_0,x_1,x_2,x_3)=f(-ix_0,x_1,x_2,x_3).
$$
Then
$$
f(x)=\frac
{p_1(-ix_0+ix_1)^2+2\,p_2(-ix_0+ix_1)(-x_2+ix_3)+p_3(x_2-ix_3)^2}
{q_1(x_2+ix_3)^2+2\,q_2(-ix_0-ix_1)(x_2+ix_3)+q_3(-ix_0-ix_1)^2}.
$$
Let $\hat f^*$ be the composition $f^*\circ\pi^*$ from the Minkowski space $\rn^4_1$.  Then
\begin{eqnarray*}
\tau(f^*)&=&-|x|_L^2\Box\hat f^*\\
&=& 16|x|_L^2(p_2q_3-p_3q_2)\frac{(q_2(x_0-x_1)-iq_3(x_2-ix_3))}{(q_3(x_0+x_1)+iq_2(x_2+ix_3))^3}   \\
&\neq& 0
\end{eqnarray*}
and
$$\tau^2(f^*)=-|x|_L^2\Box(-|x|_L^2\Delta(\hat f))=0.$$
Here $\Box$ is the tension field on $\R^4_1$ i.e. the wave operator of d'Alembert given by
$$
\Box = -\frac{\partial^2}{\partial x_0^2}+\frac{\partial^2}{\partial x_1^2}+\frac{\partial^2}{\partial x_2^2}+\frac{\partial^2}{\partial x_3^2}.
$$
From this we see that $f^*$ is proper biharmonic. It should be noted that $q\in\cn^3$ can easily be chosen such that $f^*:\H^3\to\cn$ is globally defined.
\end{example}

\section{Acknowledgements}
The author is grateful to Stefano Montaldo and Andrea Ratto for useful discussions related to this work.

\appendix

\section{}\label{general-F(z)}

The complex general linear group $\GLC n$ is an open subset of $\cn^{n\times n}\cong\cn^{n^2}$ and inherits its standard complex structure.  As before, let $z_{ij}$ with $1\leq i,\,j \leq n$ denote the matrix coefficients of a generic element $z\in\GLC n$.
Let $G$ be a Lie subgroup of $\GLC n$, $F:U\to\cn$ be a holomorphic function defined on an open subset of $\GLC n$ and $f:U\cap G\to\cn$ be the restriction of $F$ to $G$.

Following Lemma 3.2 of \cite{Gud-5}, the equations \eqref{eq:derivativeZ}, \eqref{eq:derivativeZZ} and \eqref{tau-kappa-alie-groups} can be utilized to compute the tension field of $f$, see also \cite{Gud-Mon-Rat-1}.
\begin{equation*}\label{tension-general-F}
\tau(f)= \sum_{1\leq i,j,k,\ell \leq n} \frac {\partial ^2 f}{\partial z_{ij}\partial z_{k\ell}}\,\kappa(z_{ij},z_{k\ell}) \,+\,
\sum_{1\leq i,j\leq n} \frac {\partial  f}{\partial z_{ij} }\,\tau (z_{ij}).
\end{equation*}
For the unitary group $\U n$ we can apply Lemma \ref{lemma:complex} to make the above formula more explicit
\begin{equation*}\label{tension-general-F-su-U(n)}
\tau(f)=-\sum_{1\leq i,j,k,\ell \leq n}\frac{\partial ^2 f}{\partial z_{ij}\partial z_{k\ell}}\,z_{kj}z_{i\ell}
\ -n\sum_{1\leq i,j\leq n}\frac{\partial  f}{\partial z_{ij}}\,z_{ij}.
\end{equation*}
This formula is our main tool for carrying out computer calculations for checking the results of this paper.

\end{document}